\theoremstyle{plain}
 \newtheorem{theorem}{Theorem}[section]
 \newtheorem{lemma}{Lemma}[section]
 \newtheorem{proposition}{Proposition}[section]
 \newtheorem{corollary}{Corollary}[section]
\theoremstyle{definition}
\newcommand{\Z}{\mathbb{Z}}
\newcommand{\R}{\mathbb{R}}
\newcommand{\transpose}{\mathsf{T}}
\newcommand{\smallmat}[4]{\left(\begin{smallmatrix} #1 & #2 \\ #3 & #4 \end{smallmatrix}\right)}
\newcommand{\smallvec}[2]{\left(\begin{smallmatrix} #1 \\ #2 \end{smallmatrix}\right)}
\title[Largest values of the Stern sequence, alternating binary \ldots]{Largest values of the Stern sequence, alternating binary expansions and continuants}
\author{Roland Paulin}
\address{Roland Paulin, Department of Mathematics, University of Salzburg, Hellbrunnerstr.\ 34/I, 5020 Salzburg, Austria}
\email{paulinroland@gmail.com}
\thanks{The author was supported by the Austrian Science Fund (FWF): P24574.}
\subjclass[2010]{Primary 11A55; Secondary 11B39}
\keywords{Stern sequence, alternating binary expansion, continuant}
\date{\today}
\begin{document}

\begin{abstract}
We study the largest values of the $r$th row of Stern's diatomic array.
In particular, we prove some conjectures of Lansing.
Our main tool is the connection between the Stern sequence, alternating binary expansions and continuants.
This allows us to reduce the problem of ordering the elements of the Stern sequence to the problem of ordering continuants.
We describe an operation that increases the value of a continuant, allowing us to reduce the problem of largest continuants to ordering continuants of very special shape.
Finally, we order these special continuants using some identities and inequalities involving Fibonacci numbers.
\end{abstract}

\maketitle

\section{Introduction}

The Stern sequence $(s(n))_{n \ge 0}$ is defined as follows: $s(0) = 0$, $s(1) = 1$, $s(2n) = s(n)$ and $s(2n+1) = s(n) + s(n+1)$ for every $n \ge 0$.
Stern's diatomic array consists of rows indexed by $0,1,2, \dotsc$, where if $r \ge 0$, then the $r$th row is $s(2^r), s(2^r+1), \dotsc, s(2^{r+1})$.
Stern's diatomic array can also be constructed in the following way.
Start with the $0$th row $1,1$.
If $r \ge 1$, then to construct the $r$th row, copy the previous row, and between every two consecutive numbers $x,y$, write their sum $x+y$.
This array was first studied by Stern in \cite{Stern}.
Lehmer summarized several properties of this array in \cite{Lehmer}.

For $r \ge 0$ and $m \ge 1$, let $L_m(r)$ denote the $m$th largest distinct value of the $r$th row of Stern's diatomic array.
If there are less than $m$ distinct values in the $r$th row, then we just take $L_m(r) = -\infty$.
Lucas in \cite{Lucas} has observed that the largest value in the $r$th row is $L_1(r) = F_{r+2}$, where $F_n$ denotes the $n$th Fibonacci number.
Lansing in \cite{Lan14} determined the second and third largest values $L_2(r)$ and $L_3(r)$, and formulated the following conjectures about the $L_m(r)$'s.
Conjecture 7 of \cite{Lan14} says that if $m \ge 1$ and $r \ge 4m-2$, then
\[
L_m(r) = L_m(r-1) + L_m(r-2).
\]
Conjecture 9 of \cite{Lan14} says that if $m \ge 2$ and $r \ge 4m-4$, then
\[
L_m(r) = L_{m-1}(r) - F_{r-(4m-5)} = F_{r+2} - \sum_{j=2}^m F_{r-(4j-5)}.
\]

The following theorem is our main result.
It gives a formula for $L_m(r)$ for certain values of $r$ and $m$, and it implies the two conjectures of Lansing stated above.
\begin{theorem} \label{thm:main}
If $r \in \Z_{\ge 0}$, then
\[
\{L_1(r), \dotsc, L_{\lceil \frac{r}{2} \rceil}(r)\} = \{F_{r+2} - F_i F_j ; \, i,j \in \Z_{\ge 0}, \, i+j = r-1 \}.
\]
More explicitly, if $1 \le m \le \lceil\frac{r}{2}\rceil$, then
\[
L_m(r) = F_{r+2} - F_{2m-2-b} F_{r-2m+1+b},
\]
where
\[
b = b(m,r) = \begin{cases}
0 & \textrm{ if } m \le \lfloor \frac{r+3}{4} \rfloor \textrm{ or } 2 \mid r, \\
1 & \textrm{ if } m > \lfloor \frac{r+3}{4} \rfloor \textrm{ and } 2 \nmid r.
\end{cases}
\]
\end{theorem}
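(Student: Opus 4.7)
The strategy is to use the paper's continuant representation as the main tool: each value $s(n)$ in the $r$th row equals a continuant $K(a_1,\dots,a_k)$ of a composition $(a_1,\dots,a_k)$ arising from the alternating binary expansion of $n$, and the large values in row $r$ come from compositions with $a_1+\cdots+a_k=r+1$. Thus the theorem reduces to the following problem on continuants: among all compositions of $r+1$, find the $\lceil r/2\rceil$ largest distinct values of $K$.

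Using the continuant recursion $K(a_1,\dots,a_n)=a_nK(a_1,\dots,a_{n-1})+K(a_1,\dots,a_{n-2})$ together with the Fibonacci addition formula $F_{m+n}=F_{m+1}F_n+F_mF_{n-1}$, a short induction gives
\[
K(\underbrace{1,\dots,1}_{i},\,2,\,\underbrace{1,\dots,1}_{j})=F_{r+2}-F_iF_j\qquad(i+j=r-1).
\]
Since $K$ is invariant under reversal and $F_iF_j=F_jF_i$, the set $\{F_{r+2}-F_iF_j:i+j=r-1,\,i,j\ge 0\}$ consists of exactly $\lceil r/2\rceil$ distinct numbers, all of which are realised in row $r$ (note that $i=0$ also yields $F_{r+2}=K(1^{r+1})$). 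This already furnishes the lower bound in the theorem.

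The main obstacle is the matching upper bound: every composition of $r+1$ not of the form $(1^i,2,1^j)$ or $(1^{r+1})$ has continuant strictly smaller than the smallest of these $\lceil r/2\rceil$ values. I plan to invoke the continuant-increasing operation promised in the abstract: given any composition containing an entry $\ge 3$, or two distinct positions with entries $\ge 2$, a local sum-preserving move is available that strictly increases $K$. Iterating, every such composition is eventually dominated by one of the canonical shape, and a careful accounting of the reduction shows no extraneous continuant can sneak into the top $\lceil r/2\rceil$. Identifying the correct move and tracking the reduction through every case is the crux of the argument.

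Finally, to obtain the explicit formula for $L_m(r)$, I sort the products $F_iF_{r-1-i}$ using the Binet--Lucas identity
\[
F_iF_{r-1-i}=\tfrac{1}{5}\bigl(L_{r-1}+(-1)^{i+1}L_{r-1-2i}\bigr),
\]
where $L_k$ denotes the $k$th Lucas number. When $i$ is even the correction term is nonpositive and $F_iF_{r-1-i}$ strictly increases in $i$; when $i$ is odd it is positive and $F_iF_{r-1-i}$ strictly decreases in $i$; moreover every even-$i$ value lies strictly below every odd-$i$ value. Hence the ascending sort of $\{F_iF_{r-1-i}\}$ traverses even $i=0,2,4,\dots$ first and then odd $i=\dots,5,3,1$ in reverse. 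Matching this enumeration against $i=2m-2-b$ yields precisely the parity-dependent $b(m,r)$ stated in the theorem.
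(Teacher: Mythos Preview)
Your outline captures the overall architecture of the paper's proof, and your computation $K(1^i,2,1^j)=F_{r+2}-F_iF_j$ together with the Lucas-number sort of the products $F_iF_{r-1-i}$ are both correct (the paper carries out the sort differently, via Vajda's identity in Lemma~\ref{lemma:FiFj-ordered}, but your approach is fine). The real problem is the upper bound, and there is a genuine gap.

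The continuant-increasing split $l_j\mapsto(u,v)$ shows only that any $\underline{l}$ with two or more non-$1$ parts is dominated by \emph{some} element of the canonical family $\{(1^i,2,1^j):i+j=r-1\}$, not by the \emph{smallest} one. For instance, from $\underline{l}=(1^{r-4},3,1,1)$ the only splits available land you at $w_{r-4,3}(2)$ or $w_{r-3,2}(2)$; neither equals $w_{1,r-2}(2)$, which is where the minimum $\kappa_{1,r-2}(2)=F_{r+1}+F_{r-1}$ sits. So ``iterating'' the move gives $K(\underline{l})<\kappa_{2,r-3}(2)$, which does not exclude $K(\underline{l})$ from the top $\lceil r/2\rceil$ values. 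Your phrase ``a careful accounting of the reduction'' hides exactly the step that the paper has to work hardest for. What the paper does instead is stop the reduction one level early: Proposition~\ref{prop:continuants-reduction-step} shows every $\underline{l}\in E'_r\setminus(E_{r,0}\cup E_{r,1})$ is dominated by something in $E_{r-1,0}\cup E_{r,2}$, and then Corollary~\ref{cor:max-min} computes the \emph{maximum} over $E_{r-1,0}\cup E_{r,2}$ explicitly (using closed forms for $\kappa_{p_0,p_1}(3)$ and $\kappa_{p_0,p_1,p_2}(2,2)$) and checks that it is at most the \emph{minimum} over $E_{r,1}$. This comparison is not obvious and is the heart of the argument; you have not supplied it.

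Two further points. First, the values in row $r$ correspond to $E'_r$, i.e.\ to compositions of every integer $\le r+1$ (not just $r+1$); your claim that ``the large values come from compositions with $a_1+\cdots+a_k=r+1$'' is true but itself requires proof, and the paper handles this via the $E_{r-1,0}$ case inside Proposition~\ref{prop:continuants-reduction-step}. Second, your assertion that every non-canonical composition has continuant \emph{strictly} smaller than the minimum canonical value is false: at $r=7$ one has $\kappa_{2,0,2}(2,2)=F_8+F_6-F_0=29=\kappa_{1,5}(2)$, so equality occurs. This does not break the theorem (distinct values are what matter), but it shows the bound is tight and that the argument cannot be as soft as you suggest.
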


We show now that this theorem implies the two conjectures stated above.
\begin{proof}[\protect{Proof of \cite[Conjecture 7]{Lan14}}]
Let $m \ge 1$ and $r \ge 4m-2$.
Then $m \le \lfloor \frac{(r-1)+3}{4} \rfloor$, so $L_m(r) = F_{r+2} - F_{2m-2} F_{r-2m+1}$ and $L_m(r-1) = F_{r+1} - F_{2m-2} F_{r-2m}$ by Theorem \ref{thm:main}.
If $r \ge 4m-1$, then $m \le \lfloor \frac{(r-2)+3}{4} \rfloor$, while if $r = 4m-2$, then $2 \mid r-2$, so either way $L_m(r-2) = F_r - F_{2m-2} F_{r-2m-1}$.
The conjecture now follows from
\[
F_{r+2} - F_{2m-2} F_{r-2m+1} = (F_{r+1} - F_{2m-2} F_{r-2m}) + (F_r - F_{2m-2} F_{r-2m-1}).
\]
\end{proof}

\begin{proof}[\protect{Proof of \cite[Conjecture 9]{Lan14}}]
It is enough to prove $L_{m-1}(r) - L_m(r) = F_{r-(4m-5)}$, because $L_1(r) = F_{r+2}$. 
Now $m-1 \le \lfloor \frac{r+3}{4} \rfloor$, and either $2 \mid r = 4m-4$, or $m \le \lfloor \frac{r+3}{4} \rfloor$.
So $L_{m-1}(r) = F_{r+2} - F_{2m-4} F_{r-2m+3}$ and $L_m(r) = F_{r+2} - F_{2m-2} F_{r-2m+1}$ by Theorem \ref{thm:main}.
Hence
\begin{align*}
L_{m-1}(r) - L_m(r) &= F_{2m-2} F_{r-2m+1} - F_{2m-4} F_{r-2m+3} = (-1)^{2m-4} F_2 F_{r-(4m-5)} \\
&= F_{r-(4m-5)}
\end{align*}
by Lemma \ref{lemma:FiFj-FkFl}.
\end{proof}

Here is a brief description of the contents of the paper.
In section \ref{sec:prelim} we state some basic results about Fibonacci numbers, alternating binary expansions and continuants.
In section \ref{sec:connect} we describe the connection between the Stern sequence, alternating binary expansions and continuants.
Using this description we reduce the problem of comparing the elements of the Stern sequence to the problem of comparing continuants, which is the subject of section \ref{sec:comparing}.
Using some identities involving Fibonacci numbers, we finish the proof of Theorem \ref{thm:main} in section \ref{sec:Fibonacci}.
Finally, in section \ref{sec:further-research} we discuss possible extensions of our results.

\section{Preliminaries} \label{sec:prelim}

In this section we introduce some notations, and state some basic results about Fibonacci numbers, alternating binary expansions and continuants.
Let $F_n$ denote the $n$th Fibonacci number for every $n \in \Z$.
So $F_0 = 0$, $F_1 = 1$, and $F_n = F_{n-1} + F_{n-2}$ for every $n \in \Z$.
Note that $\smallmat{1}{1}{1}{0}^n = \smallmat{F_{n+1}}{F_n}{F_n}{F_{n-1}}$ for every $n \in \Z$.
The following basic lemma describes the sign of the Fibonacci numbers.
\begin{lemma} \label{lemma:Fibonacci-sign}
If $n \in \Z$, then $F_{-n} = (-1)^{n+1} F_n$.
Hence 
\[
F_n \begin{cases}
=0 & \textrm{ if } n = 0, \\
>0 & \textrm{ if } n>0 \textrm{ or } 2 \nmid n, \\
<0 & \textrm{ if } n<0 \textrm{ and } 2 \mid n.
\end{cases}
\]
\end{lemma}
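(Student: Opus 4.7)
My plan is to prove the identity $F_{-n} = (-1)^{n+1} F_n$ first and then derive the sign dichotomy as an immediate corollary.

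For the identity, I would exploit the matrix formula $\smallmat{1}{1}{1}{0}^n = \smallmat{F_{n+1}}{F_n}{F_n}{F_{n-1}}$ stated just before the lemma. Since $\det\smallmat{1}{1}{1}{0} = -1$, Cassini's identity $F_{n+1}F_{n-1} - F_n^2 = (-1)^n$ falls out by taking determinants, and inverting the $n$th power on both sides yields
\[
\smallmat{F_{-n+1}}{F_{-n}}{F_{-n}}{F_{-n-1}} = \smallmat{1}{1}{1}{0}^{-n} = (-1)^n \smallmat{F_{n-1}}{-F_n}{-F_n}{F_{n+1}}.
\]
Comparing the $(1,2)$-entries gives $F_{-n} = (-1)^{n+1} F_n$. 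An equally elementary alternative would be induction on $n \ge 0$: the base cases $F_0 = 0$ and $F_{-1} = F_1 - F_0 = 1$ hold, and the inductive step uses the rearrangement $F_{-n} = F_{-(n-2)} - F_{-(n-1)}$ of the recurrence applied at index $-n+2$.

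For the sign claim, a routine induction using the recurrence shows that $F_n > 0$ for every $n \ge 1$, which together with $F_0 = 0$ handles nonnegative indices. For $n < 0$, write $n = -k$ with $k \ge 1$ and apply the identity: $F_n = (-1)^{k+1} F_k$ has the sign of $(-1)^{k+1}$, so it is positive when $k$ is odd (equivalently, $n$ is odd) and negative when $k$ is even (equivalently, $n$ is even).

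There is no real obstacle here; the lemma is a standard consequence of extending the Fibonacci recurrence to the full integers. The only subtlety worth flagging is that the recurrence $F_n = F_{n-1} + F_{n-2}$ is posited for every $n \in \Z$, which is precisely what lets one speak of $F_n$ at negative indices in the first place.
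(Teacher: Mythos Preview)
Your proposal is correct; the paper's own proof is a single sentence saying the identity is ``easy to check by induction on $n$'' and that the sign claim follows. Your induction alternative is exactly that, and your matrix-inverse argument is a legitimate (and equally short) variant that leans on the formula $\smallmat{1}{1}{1}{0}^n = \smallmat{F_{n+1}}{F_n}{F_n}{F_{n-1}}$ already asserted in the paper for all $n \in \Z$, so there is no circularity.
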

\begin{proof}
The first part is easy to check by induction on $n$, and the second part follows from the first part.
\end{proof}

The following lemma describes a useful identity, which allows us to compare $F_i F_j$'s with fixed $i+j$.
\begin{lemma} \label{lemma:FiFj-FkFl}
If $i,j,k,l \in \Z$ and $i+j = k+l$, then
\[
F_i F_j - F_k F_l = (-1)^k F_{i-k} F_{j-k}.
\]
Hence if $i+j = k+l$, then $F_i F_j = F_k F_l$ if and only if $\{i,j\} = \{k,l\}$.
\end{lemma}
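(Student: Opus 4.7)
My proof plan is to deduce this from Vajda's identity,
\[
F_{n+a} F_{n+b} - F_n F_{n+a+b} = (-1)^n F_a F_b \quad (n, a, b \in \Z).
\]
Setting $n = k$, $a = i-k$, $b = j-k$, the hypothesis $i+j=k+l$ yields $n+a+b = i+j-k = l$, so Vajda specializes to $F_i F_j - F_k F_l = (-1)^k F_{i-k} F_{j-k}$, which is the claim.

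To establish Vajda I would expand each term $F_{n+c}$ (for $c \in \{a, b, a+b\}$) as $F_{n+c} = F_c F_{n+1} + F_{c-1} F_n$, which is the $(1,2)$-entry of the matrix identity $M^n M^c = M^{n+c}$ for $M = \smallmat{1}{1}{1}{0}$. Multiplying out $F_{n+a} F_{n+b}$ and subtracting $F_n F_{n+a+b}$ produces a linear combination of $F_{n+1}^2$, $F_{n+1}F_n$, and $F_n^2$. Using the two identities $F_{a+b} = F_a F_{b+1} + F_{a-1} F_b$ and $F_{a+b-1} = F_a F_b + F_{a-1} F_{b-1}$ (which are entries of $M^a M^b$), the coefficient of $F_{n+1}F_n$ and the coefficient of $F_n^2$ each simplify to $-F_a F_b$, collapsing the whole expression to
\[
F_a F_b(F_{n+1}^2 - F_{n+1}F_n - F_n^2) = F_a F_b(F_{n+1} F_{n-1} - F_n^2).
\]
By Cassini's identity $F_{n+1} F_{n-1} - F_n^2 = (-1)^n$ (i.e.\ $\det M^n = (-1)^n$), this equals $(-1)^n F_a F_b$, as required.

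For the biconditional, the forward direction is immediate; conversely, $F_i F_j = F_k F_l$ with $i+j=k+l$ forces $F_{i-k} F_{j-k} = 0$ by the identity just proved, hence $i = k$ or $j = k$ by Lemma \ref{lemma:Fibonacci-sign}, and in either case $i+j=k+l$ implies $\{i,j\} = \{k,l\}$. The only real obstacle is the bookkeeping in the Vajda expansion, but since the matrix identity $M^n = \smallmat{F_{n+1}}{F_n}{F_n}{F_{n-1}}$ is valid for all $n \in \Z$, every intermediate identity holds at arbitrary integer indices with no case distinction on signs.
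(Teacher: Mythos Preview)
Your proof is correct and follows essentially the same route as the paper: the paper also substitutes $n=k$, $i\mapsto i-k$, $j\mapsto j-k$ into Vajda's identity and then invokes Lemma~\ref{lemma:Fibonacci-sign} for the biconditional. The only difference is that the paper simply cites Vajda's identity, whereas you supply a self-contained proof of it via the matrix representation and Cassini's identity; this extra derivation is correct as stated.
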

\begin{proof}
Vajda's identity (see \cite{wikiVajda}) says that
\[
F_{n+i} F_{n+j} - F_n F_{n+i+j} = (-1)^n F_i F_j
\]
for every $n,i,j \in \Z$.
Substituting $k$, $i-k$ and $j-k$ into $n$, $i$ and $j$, we obtain the stated identity.
To prove the second part, note that by Lemma \ref{lemma:Fibonacci-sign}, if $n \in \Z$, then $F_n = 0$ if and only if $n = 0$.
\end{proof}

The following lemma describes the ordering of $F_i F_j$'s with $i,j \ge 0$ and $i+j$ fixed.
We need this result to prove that the second half of Theorem \ref{thm:main}.
\begin{lemma} \label{lemma:FiFj-ordered}
Let $n \in \Z_{\ge 0}$, then the set
\[
\{F_i F_j ; \, i,j \in \Z_{\ge 0}, \, i+j = n \}
\]
has cardinality $\lceil \frac{n+1}{2} \rceil$, and if $m \in \{1, \dotsc, \lceil \frac{n+1}{2} \rceil\}$, then the $m$th smallest element of this set is $F_{2m-2-c} F_{n-(2m-2-c)}$, where
\[
c = \begin{cases}
0 & \textrm{ if } m \le \lfloor \frac{n+4}{4} \rfloor \textrm{ or } 2 \nmid n, \\
1 & \textrm{ if } m > \lfloor \frac{n+4}{4} \rfloor \textrm{ and } 2 \mid n.
\end{cases}
\]
\end{lemma}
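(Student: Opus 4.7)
The plan is to first compute the cardinality by symmetry and then deduce the ordering from a single sharp comparison formula. For the cardinality, the products $F_i F_j$ with $i + j = n$ and $i, j \ge 0$ depend only on the unordered pair $\{i, j\}$, and Lemma~\ref{lemma:FiFj-FkFl} shows that distinct unordered pairs yield distinct products; so the set has cardinality $\lfloor n/2 \rfloor + 1 = \lceil (n+1)/2 \rceil$.

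The crux of the ordering is a single pairwise comparison. For $0 \le a < b \le \lfloor n/2 \rfloor$, I would apply Lemma~\ref{lemma:FiFj-FkFl} and then rewrite $F_{a-b}$ via Lemma~\ref{lemma:Fibonacci-sign} to obtain
\[
F_a F_{n-a} - F_b F_{n-b} \;=\; (-1)^{a+1} F_{b-a} F_{n-a-b}.
\]
Since $b - a \ge 1$ and $a + b < n$ (using $a < b$ and $a, b \le \lfloor n/2 \rfloor$), both Fibonacci factors on the right are strictly positive, so the sign of the difference is $(-1)^{a+1}$, determined solely by the parity of the smaller of the two indices.

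This sign rule pins down the whole ordering. Along the even indices in $[0, \lfloor n/2 \rfloor]$ the sequence $a \mapsto F_a F_{n-a}$ is strictly increasing; along the odd indices it is strictly decreasing; and every even-index value is strictly below every odd-index value. Thus the sorted list of products starts with the even-index values in ascending order of $a$ and continues with the odd-index values in descending order of $a$. A direct count gives that the number of even indices in $[0, \lfloor n/2 \rfloor]$ equals $\lfloor n/4 \rfloor + 1 = \lfloor (n+4)/4 \rfloor$ for every $n$, which matches the threshold in the statement.

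Finally, I would verify that the explicit formula $F_{2m-2-c} F_{n-(2m-2-c)}$ reproduces this order. For $m \le \lfloor (n+4)/4 \rfloor$ with $c = 0$, the index $2m - 2$ runs through the even values in $[0, \lfloor n/2 \rfloor]$ in ascending order, matching the even phase. For $m > \lfloor (n+4)/4 \rfloor$ and $n$ even, setting $c = 1$ makes $2m - 3$ exceed $n/2$; the symmetry $F_a F_{n-a} = F_{n-a} F_a$ identifies these indices with odd indices in $[0, n/2]$ taken in descending order. For $n$ odd, $c = 0$ works throughout, and the same symmetry reflects each odd-index entry into the equal-valued even-index entry at the larger index, consolidating the entire sorted list into $F_0 F_n, F_2 F_{n-2}, \ldots, F_{n-1} F_1$. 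The main obstacle will be this last step: verifying in each residue class of $n \bmod 4$ that the reflected indices line up with the natural order of the corresponding odd (or reflected even) indices, which is routine case-checking but the only place real bookkeeping is required.
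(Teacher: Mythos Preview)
Your proposal is correct and follows essentially the same approach as the paper: both proofs rest on Lemma~\ref{lemma:FiFj-FkFl} to compute the sign of $F_iF_j - F_kF_l$ and then reduce to a parity-based case analysis. Your intermediate structural statement (even indices in $[0,\lfloor n/2\rfloor]$ give an increasing block, odd indices a decreasing block sitting above it) is a slightly cleaner packaging than the paper's direct verification that $m\mapsto F_{u(m)}F_{v(m)}$ is increasing, but the underlying computations and the final case-check by residue of $n\bmod 4$ are the same in both.
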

\begin{proof}
The last part of Lemma \ref{lemma:FiFj-FkFl} implies that the cardinality is $\lceil \frac{n+1}{2} \rceil$.
For $m \in \{1, \dotsc, \lceil \frac{n+1}{2} \rceil\}$ let $c(m)$ be defined as $c$ in the statement above, and let $u(m) = 2m-2-c(m)$ and $v(m) = n-(2m-2-c(m))$.
Then $u(m), v(m) \ge 0$ and $u(m)+v(m) = n$, so $F_{u(m)} F_{v(m)}$ is an element of the set.
Let $1 \le m < m' \le \lceil \frac{n+1}{2} \rceil$.
All we need to prove is that $F_{u(m)} F_{v(m)} < F_{u(m')} F_{v(m')}$.
The difference is
\[
F_{u(m')} F_{v(m')} - F_{u(m)} F_{v(m)} = (-1)^{u(m)} F_{u(m')-u(m)} F_{v(m')-u(m)}
\]
by Lemma \ref{lemma:FiFj-FkFl}.
Here
\[
u(m')-u(m) = 2(m'-m) - c(m') + c(m) \ge 2 - 1 > 0,
\]
so $F_{u(m') - u(m)} > 0$.
Therefore we need to prove that $(-1)^{c(m)} F_{v(m')-u(m)} > 0$.
First suppose that $2 \nmid n$.
Then $c(m) = c(m') = 0$, so $v(m')-u(m) = n - 2(m'+m-2)$ is odd, hence $(-1)^{c(m)} F_{v(m')-u(m)} > 0$ by Lemma \ref{lemma:Fibonacci-sign}.
So let $2 \mid n$.

Suppose that $m > \lfloor \frac{n+4}{4} \rfloor$.
Then $c(m) = c(m') = 1$, and $m > \lfloor \frac{n+4}{4} \rfloor = \lceil \frac{n+2}{4} \rceil$, so $m-1 \ge \lceil \frac{n+2}{4} \rceil \ge \frac{n+2}{4} > \frac{n}{4}$, hence
\[
2 \mid v(m')-u(m) = n - 2(m'+m-3) \le n - 2(2m+1-3) = n - 4(m-1) < 0.
\]
Thus $(-1)^{c(m)} F_{v(m')-u(m)} > 0$ by Lemma \ref{lemma:Fibonacci-sign}.

Finally, let $m \le \lfloor \frac{n+4}{4} \rfloor$.
Then $c(m) = 0$.
If $m' > \lfloor \frac{n+4}{4} \rfloor$, then $c(m') = 1$ and $2 \nmid v(m')-u(m)$, so $F_{v(m')-u(m)} > 0$.
If $m' \le \lfloor \frac{n+4}{4} \rfloor$, then $c(m') = 0$ and $4m'-4 \le n$, so
\[
v(m')-u(m) = n-2(m'+m-2) \ge n - 2(2m'-3) > n - (4m'-4) \ge 0,
\]
hence $F_{v(m')-u(m)}>0$.
\end{proof}

Now we turn to the discussion of alternating binary expansions.
For $d, l_0 \in \Z_{\ge 0}$, $l_1, \dotsc, l_d \in \Z_{\ge 1}$ we define
\[
A(l_0, \dotsc, l_d) = \sum_{i=0}^d (-1)^{d-i} 2^{l_0 + \dotsm + l_i}.
\]
We call this an alternating binary expansion.
The following lemma gives a bound for $A(l_0, \dotsc, l_d)$.
\begin{lemma} \label{lemma:alt-bin-bound}
If $d, l_0 \in \Z_{\ge 0}$ and $l_1, \dotsc, l_d \in \Z_{\ge 1}$, then
\[
2^{l_0+\dotsm+l_d-1} \le A(l_0, \dotsc, l_d) \le 2^{l_0+\dotsm+l_d}.
\]
If $d > 0$, then $A(l_0, \dotsc, l_d) < 2^{l_0+\dotsm+l_d}$.
\end{lemma}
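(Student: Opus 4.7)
The plan is to prove both bounds by induction on $d$, using a simple recursion for $A$. The base case $d=0$ is trivial since $A(l_0) = 2^{l_0}$, and the strict-inequality clause is vacuous in that case.

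For the inductive step, I would split off the top summand $i=d$ in the defining sum and relabel signs in the remaining $d$ terms to obtain the recursion
\[
A(l_0,\dotsc,l_d) = 2^{l_0+\dotsm+l_d} - A(l_0,\dotsc,l_{d-1}).
\]
Feeding the inductive bounds $2^{l_0+\dotsm+l_{d-1}-1}\le A(l_0,\dotsc,l_{d-1})\le 2^{l_0+\dotsm+l_{d-1}}$ into this recursion immediately produces both claimed bounds for $A(l_0,\dotsc,l_d)$; the lower one uses the hypothesis $l_d\ge 1$ to absorb $2^{l_0+\dotsm+l_{d-1}}$ into $2^{l_0+\dotsm+l_d-1}$. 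The strict upper bound when $d>0$ is then immediate from the fact that $A(l_0,\dotsc,l_{d-1})$ is a strictly positive integer, which is visible from either the inductive lower bound or from direct inspection of the defining expression.

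I do not foresee any real obstacle, since this is essentially a two-line induction once the recursion is in hand. The only mild point worth a sentence of care is the degenerate case $d=1$, $l_0=0$, in which the exponent $l_0+\dotsm+l_{d-1}-1$ formally becomes $-1$; the argument still goes through verbatim, because the strict upper bound only requires the positivity of $A(l_0,\dotsc,l_{d-1})$ as an integer, not any sharp lower estimate.
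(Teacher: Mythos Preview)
Your proof is correct and follows essentially the same approach as the paper: induction on $d$, the recursion $A(l_0,\dotsc,l_d)=2^{l_0+\dotsm+l_d}-A(l_0,\dotsc,l_{d-1})$, and the use of $l_d\ge 1$ to compare exponents. Your remark about the degenerate exponent $-1$ is a nice extra observation, but otherwise the arguments coincide.
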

\begin{proof}
We prove by induction on $d$.
For $d = 0$ this is trivial, so suppose that $d \ge 1$ and that the statement is true for smaller values of $d$.
Let $k_i = l_0 + \dotsm + l_i$ for every $i \in \{0, 1, \dotsc, d\}$.
Then $0 \le k_0 < k_1 < \dotsm < k_d$, and $A(l_0, \dotsc, l_d) = 2^{k_d} - 2^{k_{d-1}} + 2^{k_{d-2}} - \dotsm + (-1)^d 2^{k_0} = 2^{k_d} - A(l_0, \dotsc, l_{d-1})$.
Here $0 < 2^{k_{d-1}-1} \le A(l_0, \dotsc, l_{d-1}) \le 2^{k_{d-1}} \le 2^{k_d - 1}$ by the induction hypothesis, so $2^{k_d - 1} \le A(l_0, \dotsc, l_d) < 2^{k_d}$.
\end{proof}

The following lemma describes the number of alternating binary expansions of a positive integer.

\begin{lemma} \label{lemma:alt-bin-uniqueness}
Every $n \in \Z_{\ge 1}$ has exactly two alternating binary expansions, and exactly one of these has the form $n = A(l_0, 1, l_2, \dotsc, l_d)$, where $l_0 \in \Z_{\ge 0}$ and $d, l_2, \dotsc, l_d \in \Z_{\ge 1}$.
If $n$ is a power of $2$, then $d=1$ and the other expansion is $n = A(l_0)$, while otherwise $d \ge 2$ and the other expansion is $A(l_0, l_2+1, l_3, \dotsc, l_d)$.
\end{lemma}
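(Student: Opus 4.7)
The plan is to prove all three assertions (exactly two expansions, the normal form, and the explicit pairing) simultaneously by strong induction on $n$, using a merge/split operation as the key device. The merge operation sends $A(l_0, 1, l_2, \dotsc, l_d)$ with $d \ge 2$ to $A(l_0, l_2 + 1, l_3, \dotsc, l_d)$, and sends $A(l_0, 1)$ to $A(l_0)$; split is the inverse. To verify that merge preserves the value, I would write $k_i = l_0 + \dotsm + l_i$, so that $l_1 = 1$ forces $k_1 = k_0 + 1$; then the two lowest contributions of the original sum collapse as $(-1)^{d-1} 2^{k_1} + (-1)^d 2^{k_0} = (-1)^{d-1}(2^{k_0 + 1} - 2^{k_0}) = (-1)^{d-1} 2^{k_0}$, which is precisely the bottom contribution of the merged expansion (the higher-order terms match directly). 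Hence every alternating binary expansion has a distinct merge/split partner.

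For the induction, Lemma \ref{lemma:alt-bin-bound} constrains $2^{k_d - 1} \le n \le 2^{k_d}$, with strict right inequality when $d > 0$. If $n$ is not a power of $2$, this pins down $k_d = \lfloor \log_2 n \rfloor + 1$ uniquely; setting $n' = 2^{k_d} - n$, one checks $1 \le n' < 2^{k_d - 1}$, so $n' < n$ and the inductive hypothesis gives exactly two expansions of $n'$ with top indices $k_{d-1} < k_d$, each extending uniquely to an expansion of $n$ by appending $l_d = k_d - k_{d-1} \ge 1$. If $n = 2^k$, then $k_d \in \{k, k+1\}$: the choice $k_d = k$ forces $d = 0$ and yields $A(k)$, while $k_d = k + 1$ gives $n' = 2^k$ and, since any further descent would have to terminate at a strictly positive subexpression (again by Lemma \ref{lemma:alt-bin-bound}), forces $k_{d-1} = k$ and $d = 1$, yielding $A(k, 1)$. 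Either way $n$ has exactly two expansions.

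The explicit form in the statement is then immediate: the two expansions of $n$ are always merge/split partners, so one has the form $A(l_0, 1, l_2, \dotsc, l_d)$ (with $l_1 = 1$), while the other is $A(l_0)$ for $n = 2^{l_0}$ (where necessarily $d = 1$) or $A(l_0, l_2 + 1, l_3, \dotsc, l_d)$ with $d \ge 2$ when $n$ is not a power of $2$ (since $d = 1$ would give $A(l_0, 1) = 2^{l_0}$, contradicting the assumption). The main obstacle I anticipate is the power-of-$2$ case, where the natural recursion $n \mapsto n' = 2^{k_d} - n$ returns to $n$ itself and so cannot be handled by induction on $n$ alone; this requires unpacking one further level of the expansion and invoking the positivity bound of Lemma \ref{lemma:alt-bin-bound} to terminate the descent at $d = 1$.
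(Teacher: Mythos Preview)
Your proof is correct and uses the same core mechanism as the paper: verify the merge/split identity, then run strong induction on $n$ via the reduction $n' = 2^{k_d} - n$ together with Lemma~\ref{lemma:alt-bin-bound}. The paper organizes the induction slightly more economically by first reducing to the claim that there is exactly one expansion with $d \ge 1$ and $l_1 = 1$; this restriction forces the strict inequality $n < 2^{k_d}$ throughout and hence pins down $k_d$ uniformly, which dissolves the power-of-$2$ case split you flagged as the main obstacle.
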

\begin{proof}
It is easy to check that $A(l_0, 1) = A(l_0) = 2^{l_0}$ and $A(l_0, 1, l_2, \dotsc, l_d) = A(l_0, l_2+1, l_3, \dotsc, l_d)$ for $d \ge 2$.
Hence it is enough to prove that every $n \in \Z_{\ge 1}$ has exactly one alternating binary expansion of the form $n = A(l_0, \dotsc, l_d)$ such that $d \ge 1$ and $l_1 = 1$.
We prove by induction on $n$.
Let $k$ be the unique positive integer such that $2^{k-1} \le n < 2^k$.
Let $k_i = l_0 + \dotsm + l_i$ for $i \in \{0, \dotsc, d\}$.
Then $2^{k_d} \le n < 2^{k_d}$ by Lemma \ref{lemma:alt-bin-bound}, so $k_d = k$.
Moreover $n = 2^k - A(l_0, \dotsc, l_{d-1})$, so $n' = A(l_0, \dotsc, l_{d-1}) = 2^k - n$.
If $d = 1$, then $l_0 = k-1$ and $n = A(l_0,1) = 2^{k-1}$.
Now let $d \ge 2$.
Then $A(l_0, \dotsc, l_{d-1}) < 2^{k_{d-1}} \le 2^{k-1}$ by Lemma \ref{lemma:alt-bin-bound}, so $2^{k-1} < n < 2^k$ and $n' < 2^{k-1} < n$.
By the induction hypothesis, there is a unique expansion $n' = A(l_0, \dotsc, l_{d-1})$ with $l_1 = 1$, where furthermore $l_0 + \dotsm + l_{d-1} \le k-1$, because $n' < 2^{k-1}$.
Then $l_d = k - (l_0 + \dotsm + l_{d-1}) \in \Z_{\ge 1}$ is also determined.
\end{proof}

Now we will recall some basic facts about continuants.
Continuants are polynomials in several variables, which come up often when working with continued fractions.
We define the $d$th continuant $K_d(X_1, \dotsc, X_d) \in \Z[X_1, \dotsc, X_d]$ for every $d \in \Z_{\ge 0}$, by the following recursion: $K_0 = 1$, $K_1(X_1) = X_1$, and
\[
K_d(X_1, \dotsc, X_d) = X_d K_{d-1}(X_1, \dotsc, X_{d-1}) + K_{d-2}(X_1, \dotsc, X_{d-2})
\]
for every $d \ge 2$.
We can safely write $K(X_1, \dotsc, X_d)$ instead of $K_d(X_1, \dotsc, X_d)$, because $d$ is anyway determined by the number of variables.
If $\underline{X} = (X_1, \dotsc, X_d)$, then we will also use the notation $K(\underline{X}) = K(X_1, \dotsc, X_d)$.
The continuants are related to continued fractions by the following identity:
\[
[a_0, a_1, \dotsc, a_d] = a_0 + \frac{1}{a_1 + \frac{1}{\ddots + \frac{1}{a_d}}} = \frac{K(a_0, \dotsc, a_d)}{K(a_1, \dotsc, a_d)}
\]
for every $d \ge 0$.

The following lemma gives maybe the most practical description of continuants, using $2 \times 2$ matrices.
\begin{lemma} \label{lemma:continuants-with-matrices}
If $d \ge 2$, then
\[
\begin{pmatrix}
K_d(X_1, \dotsc, X_d) & K_{d-1}(X_1, \dotsc, X_{d-1}) \\
K_{d-1}(X_2, \dotsc, X_d) & K_{d-2}(X_2, \dotsc, X_{d-1})
\end{pmatrix} =
\begin{pmatrix} X_1 & 1 \\ 1 & 0 \end{pmatrix} \dotsm \begin{pmatrix} X_d & 1 \\ 1 & 0 \end{pmatrix}.
\]
So if $d \ge 0$, then $K_d(X_1, \dotsc, X_d) = M_{1,1}$, where $M = \smallmat{X_1}{1}{1}{0} \dotsm \smallmat{X_d}{1}{1}{0}$.
\end{lemma}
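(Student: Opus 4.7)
The plan is to prove the matrix identity by induction on $d$, and then to deduce the formula $K_d = M_{1,1}$ as an easy corollary. The key observation is that the recursion defining $K_d$ is precisely what is produced when one multiplies a $2 \times 2$ matrix on the right by $\smallmat{X_d}{1}{1}{0}$, so the induction should go through almost mechanically; I do not anticipate any real obstacle.

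For the base case, I would take $d = 2$ and simply compute
\[
\begin{pmatrix} X_1 & 1 \\ 1 & 0 \end{pmatrix} \begin{pmatrix} X_2 & 1 \\ 1 & 0 \end{pmatrix} = \begin{pmatrix} X_1 X_2 + 1 & X_1 \\ X_2 & 1 \end{pmatrix},
\]
then observe that $K_2(X_1,X_2) = X_2 X_1 + K_0 = X_1 X_2 + 1$, that $K_1(X_1) = X_1$ and $K_1(X_2) = X_2$, and that $K_0 = 1$, matching the claimed matrix entrywise.

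For the inductive step with $d \geq 3$, I would assume the identity for $d-1$, so that the product $\smallmat{X_1}{1}{1}{0} \dotsm \smallmat{X_{d-1}}{1}{1}{0}$ equals the matrix with entries $K_{d-1}(X_1,\dotsc,X_{d-1})$, $K_{d-2}(X_1,\dotsc,X_{d-2})$ in the top row and $K_{d-2}(X_2,\dotsc,X_{d-1})$, $K_{d-3}(X_2,\dotsc,X_{d-2})$ in the bottom row. Right-multiplying by $\smallmat{X_d}{1}{1}{0}$ then gives, in the top-left entry, $K_{d-1}(X_1,\dotsc,X_{d-1}) X_d + K_{d-2}(X_1,\dotsc,X_{d-2}) = K_d(X_1,\dotsc,X_d)$ by the defining recursion, and, in the bottom-left entry, $K_{d-2}(X_2,\dotsc,X_{d-1}) X_d + K_{d-3}(X_2,\dotsc,X_{d-2}) = K_{d-1}(X_2,\dotsc,X_d)$ by the same recursion (applied to the sequence starting at $X_2$). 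The right column of the product is just the left column of the inductive matrix, giving $K_{d-1}(X_1,\dotsc,X_{d-1})$ on top and $K_{d-2}(X_2,\dotsc,X_{d-1})$ on the bottom, as required.

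For the final sentence of the lemma, for $d \geq 2$ the formula $K_d(X_1,\dotsc,X_d) = M_{1,1}$ is immediate from the first part. For the two small cases, $d = 0$ gives $M = I$ (empty product) so $M_{1,1} = 1 = K_0$, and $d = 1$ gives $M = \smallmat{X_1}{1}{1}{0}$ so $M_{1,1} = X_1 = K_1(X_1)$. The only care needed throughout is to ensure the continuant recursion is being applied in the last variable (never the first), which is the version in which it was defined, so no appeal to symmetry of continuants is required.
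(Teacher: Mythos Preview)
Your proof is correct and follows exactly the approach the paper indicates: the paper's own proof is the single sentence ``This easily follows from the defining recursion by induction on $d$,'' and you have supplied precisely those details. Your care to apply the recursion only in the last variable (so as not to presuppose the symmetry lemma that comes afterward) is appropriate, since the paper proves symmetry as a consequence of this lemma.
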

\begin{proof}
This easily follows from the defining recursion by induction on $d$.
\end{proof}

The continuants have the following symmetry property.
\begin{lemma} \label{prop:continuant-symmetry}
$K(X_1, \dotsc, X_d) = K(X_d, \dotsc, X_1)$ for every $d \ge 0$.
\end{lemma}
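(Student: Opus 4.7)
The plan is to deduce the symmetry directly from the matrix description in Lemma \ref{lemma:continuants-with-matrices}, rather than setting up an induction from scratch.

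First I would observe the key structural feature: each factor $\smallmat{X_i}{1}{1}{0}$ is a symmetric matrix. So if I set $M = \smallmat{X_1}{1}{1}{0} \dotsm \smallmat{X_d}{1}{1}{0}$, then taking the transpose and using $(AB)^{\transpose} = B^{\transpose} A^{\transpose}$ reverses the order of the factors while leaving each factor unchanged, giving
\[
M^{\transpose} = \smallmat{X_d}{1}{1}{0} \dotsm \smallmat{X_1}{1}{1}{0}.
\]
By Lemma \ref{lemma:continuants-with-matrices} applied to the reversed tuple $(X_d, \dotsc, X_1)$, the $(1,1)$ entry of $M^{\transpose}$ equals $K(X_d, \dotsc, X_1)$. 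On the other hand, the $(1,1)$ entry of $M^{\transpose}$ is the $(1,1)$ entry of $M$, which equals $K(X_1, \dotsc, X_d)$ by the same lemma. Comparing the two expressions yields the desired identity.

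The only mild issue is that Lemma \ref{lemma:continuants-with-matrices} is stated for $d \ge 2$ (for the full matrix identity) and for $d \ge 0$ (for the $(1,1)$-entry identity), so the cases $d \in \{0,1\}$ should be checked directly: for $d=0$ the claim is $1 = 1$, and for $d = 1$ it is $K(X_1) = X_1 = K(X_1)$. There is no real obstacle here; the proof is essentially a one-line manipulation once the matrix reformulation is in hand, and the main thing to be careful about is simply invoking the transpose identity cleanly.
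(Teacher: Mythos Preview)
Your proof is correct and follows exactly the paper's approach: define $M$ as the product of the symmetric matrices $\smallmat{X_i}{1}{1}{0}$, take transposes to reverse the order, and compare $(1,1)$ entries via Lemma~\ref{lemma:continuants-with-matrices}. Your explicit handling of $d \in \{0,1\}$ is a small addition but not a departure from the method.
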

\begin{proof}
Let $M = \smallmat{X_1}{1}{1}{0} \dotsm \smallmat{X_d}{1}{1}{0}$, then
\[
M^{\transpose} = \smallmat{X_d}{1}{1}{0}^{\transpose} \dotsm \smallmat{X_1}{1}{1}{0}^{\transpose} = \smallmat{X_d}{1}{1}{0} \dotsm \smallmat{X_1}{1}{1}{0},
\]
hence
\[
K(X_d, \dotsc, X_1) = (M^{\transpose})_{1,1} = M_{1,1} = K(X_1, \dotsc, X_d)
\]
by Lemma \ref{lemma:continuants-with-matrices}.
\end{proof}

The following lemma states a few simple identities involving continuants.
\begin{lemma} \label{lemma:K-easy-lemma}
If $d \ge 1$, then
\[
K(X_1, \dotsc, X_d) = K(X_1-1, X_2, \dotsc, X_d) + K(X_2, \dotsc, X_d).
\]
If $d \ge 2$, then
\[
K(1, X_2, \dotsc, X_d) = K(X_2, \dotsc, X_d) + K(X_3, \dotsc, X_d).
\]
If $d \ge 1$, then
\[
K(1, X_1, X_2, \dotsc, X_d) = K(X_1+1,X_2, \dotsc, X_d)
\]
and
\[
K(X_1, \dotsc, X_{d-1}, X_d, 1) = K(X_1, \dotsc, X_{d-1}, X_d + 1).
\]
\end{lemma}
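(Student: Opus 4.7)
The plan is to reduce all four identities to the standard defining recursion combined with the symmetry property from Lemma \ref{prop:continuant-symmetry}. The key preliminary observation is a \emph{left-end} recursion: for $d \ge 2$,
\[
K(X_1, X_2, \dotsc, X_d) = X_1 K(X_2, \dotsc, X_d) + K(X_3, \dotsc, X_d),
\]
which follows by applying the defining recursion to $K(X_d, \dotsc, X_1)$ and then using $K(X_1, \dotsc, X_d) = K(X_d, \dotsc, X_1)$. (For $d=1$ this becomes $K(X_1) = X_1$ under the convention $K_0 = 1$, $K_{-1} = 0$, which is also fine.) Once this is in hand, the four identities drop out almost mechanically.

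First I would handle identity (1). The left-end recursion shows that $K(X_1, \dotsc, X_d)$ is affine linear in $X_1$ with slope $K(X_2, \dotsc, X_d)$, so replacing $X_1$ by $X_1 - 1$ decreases the value by exactly $K(X_2, \dotsc, X_d)$; rearranging gives (1). For $d=1$ one checks the trivial case directly. Next, identity (2) is just the left-end recursion specialized to $X_1 = 1$.

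For identity (3), I would apply the left-end recursion to the left-hand side treating the leading entry $1$ as the new $X_1$: this yields $K(1, X_1, \dotsc, X_d) = K(X_1, \dotsc, X_d) + K(X_2, \dotsc, X_d)$, and then a second application of the left-end recursion to $K(X_1, \dotsc, X_d)$ combines the two terms into $(X_1+1) K(X_2, \dotsc, X_d) + K(X_3, \dotsc, X_d) = K(X_1+1, X_2, \dotsc, X_d)$, as required. Finally, identity (4) is deduced from (3) by reversing the tuple: apply Lemma \ref{prop:continuant-symmetry} to both sides of $K(1, Y_1, \dotsc, Y_d) = K(Y_1+1, Y_2, \dotsc, Y_d)$ with $Y_i = X_{d+1-i}$.

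I do not anticipate a genuine obstacle here; the only mild care needed is to keep track of small cases (especially $d=1$ in (1) and (3), where some sub-tuples have length $0$) and to invoke the convention $K_0 = 1$ consistently so that every displayed equation makes sense. Once the left-end recursion is recorded, the rest of the proof is essentially arithmetic on one-line expressions.
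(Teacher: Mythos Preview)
Your proposal is correct and follows essentially the same route as the paper: both derive the left-end recursion $K(X_1,\dotsc,X_d)=X_1K(X_2,\dotsc,X_d)+K(X_3,\dotsc,X_d)$ from symmetry plus the defining recursion, then read off (1) and (2) from it, obtain (3) by combining, and get (4) from (3) via symmetry. The only cosmetic difference is that the paper phrases the derivation of (3) as ``use (1) then (2)'' while you unfold the left-end recursion twice directly; these are the same computation.
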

\begin{proof}
The identities are trivial for $d = 1$, so assume that $d \ge 2$.
Using Lemma \ref{prop:continuant-symmetry} and the defining recursion of the continuants, we obtain
\[
K(X_1, \dotsc, X_d) = X_1 K(X_2, \dotsc, X_d) + K(X_3, \dotsc, X_d).
\]
Substituting $X_1-1$ into $X_1$, we get
\[
K(X_1-1, X_2, \dotsc, X_d) = (X_1-1) K(X_2, \dotsc, X_d) + K(X_3, \dotsc, X_d).
\]
These two equations immediately imply the first part of the lemma.
Substituting $1$ into $X_1$ in the first equation, we obtain the second part of the lemma.

The fourth identity follows from the third one by Lemma \ref{prop:continuant-symmetry}.
Finally,
\[
K(X_1+1, X_2, \dotsc, X_d) = K(X_1, \dotsc, X_d) + K(X_2, \dotsc, X_d) = K(1, X_1, \dotsc, X_d)
\]
by the first two parts of the lemma.
\end{proof}

\section{Connecting the Stern sequence, alternating binary expansions and continuants} \label{sec:connect}

The following proposition describes the connection between the Stern sequence, alternating binary expansions and continuants.
This result plays a central role in this paper: it allows us to reduce the problem of ordering the elements of the Stern sequence to the problem of ordering continuants.
\begin{proposition} \label{prop:s(A(l0,...,ld))=K(l1,...,ld)}
If $d, l_0 \in \Z_{\ge 0}$ and $l_1, \dotsc, l_d \in \Z_{\ge 1}$, then
\[
s(A(l_0, l_1, \dotsc, l_d)) = K(l_1, \dotsc, l_d).
\]
\end{proposition}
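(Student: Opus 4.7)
The plan is to prove the identity by induction on $d$, after reducing to the case $l_0 = 0$. The key tool is the recursion
\[
A(0, l_1, l_2, \dotsc, l_d) = 2^{l_1} A(0, l_2, \dotsc, l_d) + (-1)^d \qquad (d \ge 1),
\]
which follows immediately from the defining sum by separating off the $i=0$ term and re-indexing.

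First I would observe that $A(l_0, l_1, \dotsc, l_d) = 2^{l_0} A(0, l_1, \dotsc, l_d)$, since incrementing $l_0$ scales every exponent, so by repeated application of $s(2n) = s(n)$ we have $s(A(l_0, l_1, \dotsc, l_d)) = s(A(0, l_1, \dotsc, l_d))$. Next I would establish, by a short induction on $k \ge 1$ using the defining recursion of $s$, the two identities
\[
s(2^k N + 1) = k\, s(N) + s(N+1), \qquad s(2^k N - 1) = k\, s(N) + s(N-1),
\]
valid for $N \ge 1$.

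The main induction on $d$ then has base cases $d = 0$ (trivial, $s(1) = 1 = K()$) and $d = 1$ (where $A(0, l_1) = 2^{l_1} - 1$ and the second Stern identity with $N = 1$ yields $s(2^{l_1} - 1) = l_1 = K(l_1)$). For $d \ge 2$, set $N = A(0, l_2, \dotsc, l_d)$ and $M = A(0, l_3, \dotsc, l_d)$. Applying the $A$-recursion twice gives $A(0, l_1, \dotsc, l_d) = 2^{l_1} N + (-1)^d$ and $N + (-1)^d = 2^{l_2} M$, so $s(N + (-1)^d) = s(M)$ by $s(2n) = s(n)$. The appropriate Stern identity above then yields
\[
s(A(0, l_1, \dotsc, l_d)) = l_1\, s(N) + s\bigl(N + (-1)^d\bigr) = l_1\, s(N) + s(M).
\]
By the inductive hypothesis, $s(N) = K(l_2, \dotsc, l_d)$ and $s(M) = K(l_3, \dotsc, l_d)$, and the continuant recursion in the form $K(l_1, \dotsc, l_d) = l_1 K(l_2, \dotsc, l_d) + K(l_3, \dotsc, l_d)$ (derived from the defining recursion together with Lemma \ref{prop:continuant-symmetry}, as in the proof of Lemma \ref{lemma:K-easy-lemma}) finishes the induction.

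The main obstacle is parity bookkeeping: the sign $(-1)^d$ appears in the $A$-recursion for $A(0, l_1, \dotsc, l_d)$ but with the opposite sign in the $A$-recursion for $A(0, l_2, \dotsc, l_d)$, and these opposite signs are precisely what makes the ``$+1$'' or ``$-1$'' in the two Stern identities land on the same quantity $s(M)$ regardless of the parity of $d$. One must also check that $N \ge 1$ so that $s(N-1)$ is meaningful, which is immediate from Lemma \ref{lemma:alt-bin-bound}.
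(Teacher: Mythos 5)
Your proof is correct, but it follows a different route from the paper's. The paper inducts on the total sum $l_0 + \dotsm + l_d$ and peels off one unit at a time: writing $n = A(0,l_1,\dotsc,l_d) = 2m+1$, it identifies $m$ and $m+1$ explicitly as $A(0,l_1-1,l_2,\dotsc,l_d)$ and $A(l_1-1,l_2,\dotsc,l_d)$ (or the analogous expressions when $l_1 = 1$), applies the single defining recursion $s(2m+1)=s(m)+s(m+1)$ once, and matches the result against the one-step continuant identities of Lemma \ref{lemma:K-easy-lemma}; this forces a case split on $l_1 \ge 2$ versus $l_1 = 1$ crossed with the parity of $d$. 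You instead induct on $d$ and consume the entire block $l_1$ in one stroke, via the recursion $A(0,l_1,\dotsc,l_d) = 2^{l_1}A(0,l_2,\dotsc,l_d) + (-1)^d$ together with the iterated Stern identities $s(2^kN\pm 1) = k\,s(N) + s(N\pm 1)$, matching directly against the mirrored continuant recursion $K(l_1,\dotsc,l_d) = l_1 K(l_2,\dotsc,l_d) + K(l_3,\dotsc,l_d)$. I checked the key points: the $A$-recursion is right, the sign flip between consecutive levels does make $N + (-1)^d = 2^{l_2}M$ so that both parities land on $s(M)$, and the two auxiliary Stern identities hold by an easy induction on $k$ (with $N \ge 1$ guaranteed by Lemma \ref{lemma:alt-bin-bound}). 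Your version buys a cleaner induction with no case split on the size of $l_1$, at the cost of first establishing the iterated Stern identities; the paper's version stays closer to the bare defining recursions of $s$ and $K$. Note that your induction on $d$ must be strong induction (you invoke the hypothesis at both $d-1$ and $d-2$), which is fine but worth stating.
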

\begin{proof}
We prove by induction on $l_0 + \dotsm + l_d$.
So let $S \in \Z_{\ge 0}$, and suppose the statement is true if $l_0 + \dotsm + l_d < S$.
Now let $l_0 + \dotsm + l_d = S$.
If $l_0 > 0$, then $A(l_0, l_1, \dotsc, l_d) = 2^{l_0} A(0, l_1, \dotsc, l_d)$, so $s(A(l_0, l_1, \dotsc, l_d)) = s(A(0, l_1, \dotsc, l_d)) = K(l_1, \dotsc, l_d)$ by the induction hypothesis.
So assume that $l_0 = 0$.

Let $k_i = l_0 + \dotsm + l_i$ for every $i \in \{0, \dotsc, d\}$, and let $n = A(l_0, l_1, \dotsc, l_d) = \sum_{i=0}^d (-1)^{d-i} 2^{k_i}$.
Note that $k_0 = l_0 = 0$, so $k_1 = l_1$.
Here $n > 0$ by Lemma \ref{lemma:alt-bin-bound}, and $n$ is odd, so $n = 2m+1$ for some $m \in \Z_{\ge 0}$.
If $d = 0$, then $n = 1$, and $s(1) = 1 = K()$.
So let $d \ge 1$.

First suppose that $l_1 \ge 2$.
If $d$ is even, then $m = A(l_1-1, l_2, \dotsc, l_d)$ and $m+1 = A(0, l_1-1, l_2, \dotsc, l_d)$, while if $d$ is odd, then $m+1 = A(l_1-1, l_2, \dotsc, l_d)$ and $m = A(0, l_1-1, l_2, \dotsc, l_d)$.
(Note that $l_1-1 \ge 1$.)
So either way we get
\begin{align*}
s(n) &= s(m) + s(m+1) = s(A(0, l_1-1, l_2, \dotsc, l_d)) + s(A(l_1-1, l_2, \dotsc, l_d)) \\
&= K(l_1-1, l_2, \dotsc, l_d) + K(l_2, \dotsc, l_d) = K(l_1, \dotsc, l_d)
\end{align*}
using the induction hypothesis and Lemma \ref{lemma:K-easy-lemma}.

Now suppose that $l_1 = 1$.
If $d = 1$, then $n = A(0,1) = 1$ and $s(n) = 1 = K(1)$.
So let $d \ge 2$.
If $d$ is odd, then $m = A(l_2, \dotsc, l_d)$ and $m+1 = A(0, l_2, \dotsc, l_d)$, while if $d$ is even, then $m+1 = A(l_2, \dotsc, l_d)$ and $m = A(0, l_2, \dotsc, l_d)$.
So either way we get
\begin{align*}
s(n) &= s(m) + s(m+1) = s(A(0, l_2, \dotsc, l_d)) + s(A(l_2, \dotsc, l_d)) \\
&= K(l_2, \dotsc, l_d) + K(l_3, \dotsc, l_d) = K(1, l_2, \dotsc, l_d) = K(l_1, \dotsc, l_d)
\end{align*}
using the induction hypothesis and Lemma \ref{lemma:K-easy-lemma}.
\end{proof}

For $r \in \Z$ let us define
\[
E_r = \{(l_1, \dotsc, l_d); \, d \in \Z_{\ge 1}, l_1, \dotsc, l_d \in \Z_{\ge 1}, \, l_1 = l_d = 1, \, l_1 + \dotsm + l_d = r+1\}
\]
and
\[
E'_r = \{(l_1, \dotsc, l_d); \, d \in \Z_{\ge 1}, l_1, \dotsc, l_d \in \Z_{\ge 1}, \, l_1 = l_d = 1, \, l_1 + \dotsm + l_d \le r+1\}.
\]
Using Lemmas \ref{lemma:alt-bin-bound}, \ref{lemma:alt-bin-uniqueness} and Proposition \ref{prop:s(A(l0,...,ld))=K(l1,...,ld)}, we obtain the following corollary.
\begin{corollary} \label{cor:diatomic-row=continuants}
If $r \in \Z_{\ge 0}$, then the set of values of the $r$th row of Stern's diatomic array is
\[
\{s(n); \, 2^r \le n \le 2^{r+1} \} = \{K(\underline{l}); \, \underline{l} \in E'_r \}.
\]
\end{corollary}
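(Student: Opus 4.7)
The plan is to establish the two inclusions separately, using Proposition \ref{prop:s(A(l0,...,ld))=K(l1,...,ld)} as the dictionary between Stern values and continuants, and Lemma \ref{lemma:alt-bin-bound} to tie the exponent sum of an alternating binary expansion to the row index $r$.

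For the $\supseteq$ inclusion, given $\underline{l} = (l_1, \dotsc, l_d) \in E'_r$, I would set $l_0 = r+1 - (l_1 + \dotsm + l_d) \in \Z_{\ge 0}$ and $n = A(l_0, l_1, \dotsc, l_d)$. Then Proposition \ref{prop:s(A(l0,...,ld))=K(l1,...,ld)} yields $s(n) = K(\underline{l})$, and since $l_0 + l_1 + \dotsm + l_d = r+1$, Lemma \ref{lemma:alt-bin-bound} gives $2^r \le n \le 2^{r+1}$, so $s(n)$ is indeed a value of the $r$th row.

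For the $\subseteq$ inclusion, fix $n$ with $2^r \le n \le 2^{r+1}$. By Lemma \ref{lemma:alt-bin-uniqueness} there is an alternating binary expansion $n = A(l_0, 1, l_2, \dotsc, l_d)$ with $l_1 := 1$, and Proposition \ref{prop:s(A(l0,...,ld))=K(l1,...,ld)} gives $s(n) = K(1, l_2, \dotsc, l_d)$. If $n$ is a power of $2$ (equivalently $d = 1$), then $s(n) = K(1) = 1$ and the tuple $(1) \in E'_r$, so we are done. Otherwise $d \ge 2$ and $2^r \le n < 2^{r+1}$, so Lemma \ref{lemma:alt-bin-bound} forces $l_0 + 1 + l_2 + \dotsm + l_d = r+1$, whence $1 + l_2 + \dotsm + l_d \le r+1$. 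If $l_d = 1$, the tuple $(1, l_2, \dotsc, l_d)$ already lies in $E'_r$. If $l_d \ge 2$, I rewrite using the identity $K(X_1, \dotsc, X_{d-1}, X_d + 1) = K(X_1, \dotsc, X_{d-1}, X_d, 1)$ from Lemma \ref{lemma:K-easy-lemma} (read right-to-left) to obtain $s(n) = K(1, l_2, \dotsc, l_{d-1}, l_d - 1, 1)$; the new tuple has first and last entry $1$, preserves the sum, and hence lies in $E'_r$.

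The only modest obstacle is this last normalization step that forces $l_d = 1$ on the continuant side; everything else is a direct application of the preceding results, and the identity from Lemma \ref{lemma:K-easy-lemma} is tailor-made for exactly this purpose.
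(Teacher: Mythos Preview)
Your proof is correct and follows essentially the same route as the paper's: both directions use Proposition~\ref{prop:s(A(l0,...,ld))=K(l1,...,ld)} and Lemma~\ref{lemma:alt-bin-bound} in the same way, and the normalization step for $l_d \ge 2$ via Lemma~\ref{lemma:K-easy-lemma} is exactly what the paper does. The only cosmetic difference is that the paper treats the endpoints $n = 2^r, 2^{r+1}$ before invoking Lemma~\ref{lemma:alt-bin-uniqueness}, whereas you invoke it uniformly and then branch on whether $n$ is a power of~$2$; this has no effect on the argument.
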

\begin{proof}
Suppose that $\underline{l} \in E'_r$.
Let $l_0 = r+1 - \sum_{i=1}^r l_i$ and $n = A(l_0, \dotsc, l_d)$, then $2^r \le n \le 2^{r+1}$ and $s(n) = K(\underline{l})$ by Lemma \ref{lemma:alt-bin-bound} and Proposition \ref{prop:s(A(l0,...,ld))=K(l1,...,ld)}.
So the left hand side contains the right hand side.
Conversely, let $n \in \{2^r, \dotsc, 2^{r+1}\}$.
If $n = 2^r$ or $n = 2^{r+1}$, then $s(n) = 1 = K(1)$.
So suppose that $2^r < n < 2^{r+1}$.
According to Lemma \ref{lemma:alt-bin-uniqueness}, $n$ has an alternating binary expansion $n = A(l_0, \dotsc, l_d)$ with $d \ge 1$ and $l_1 = 1$.
Here $l_1 + \dotsm + l_d \le l_0 + \dotsm + l_d = r+1$ by Lemma \ref{lemma:alt-bin-bound}, and $s(n) = K(l_1, \dotsc, l_d)$ by Proposition \ref{prop:s(A(l0,...,ld))=K(l1,...,ld)}.
So if $l_d = 1$, then $\underline{l} = (l_1, \dotsc, l_d) \in E'_r$ and $s(n) = K(\underline{l})$.
If $l_d > 1$, then $\underline{l}' = (l_1, \dotsc, l_d - 1, 1) \in E'_r$ and $s(n) = K(\underline{l}) = K(\underline{l'})$ by Lemma \ref{lemma:K-easy-lemma}.
So the right hand side contains the left hand side.
\end{proof}
This corollary implies that $L_m(r)$ is the $m$th largest distinct value in $\{K(\underline{l}); \, \underline{l} \in E'_r \}$.
So we need to compare the continuants $K(\underline{l})$, where $\underline{l} \in E'_r$.

\section{Comparing continuants} \label{sec:comparing}

The following proposition describes a simple operation on $(l_1, \dotsc, l_d)$ that increases $K(l_1, \dotsc, l_d)$.
This operation is our main tool in comparing continuants.
\begin{proposition} \label{prop:continuant-operation}
Let $d, l_1, \dotsc, l_d \in \Z_{\ge 1}$, $j \in \{1, \dotsc, d\}$, and suppose that $l_j = u+v$ for some $u,v \in \Z_{\ge 1}$.
Then
\[
K(l_1, \dotsc, l_d) \le K(l_1, \dotsc, l_{j-1}, u, v, l_{j+1}, \dotsc, l_d),
\]
where equality holds if and only if $j=1$ and $u=1$, or $j=d$ and $v=1$.
\end{proposition}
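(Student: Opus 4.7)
My plan is to exploit the matrix description from Lemma \ref{lemma:continuants-with-matrices} and reduce the claim to a rank-one perturbation computation. The key observation is the elementary identity
\[
\smallmat{u}{1}{1}{0} \smallmat{v}{1}{1}{0} - \smallmat{u+v}{1}{1}{0} = \smallmat{(u-1)(v-1)}{u-1}{v-1}{1} = \smallvec{u-1}{1} \smallvec{v-1}{1}^{\transpose},
\]
which is immediate to verify. Setting $A = \smallmat{l_1}{1}{1}{0} \dotsm \smallmat{l_{j-1}}{1}{1}{0}$ and $B = \smallmat{l_{j+1}}{1}{1}{0} \dotsm \smallmat{l_d}{1}{1}{0}$ (empty products being the identity), Lemma \ref{lemma:continuants-with-matrices} yields
\[
K(l_1, \dotsc, l_{j-1}, u, v, l_{j+1}, \dotsc, l_d) - K(l_1, \dotsc, l_d) = \bigl(A \smallvec{u-1}{1}\bigr)_1 \cdot \bigl(\smallvec{v-1}{1}^{\transpose} B\bigr)_1,
\]
where I used that inserting a rank-one factor $x y^{\transpose}$ between $A$ and $B$ makes the $(1,1)$-entry of the triple product split as $(Ax)_1 \cdot (y^{\transpose} B)_1$.

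The next step is to show both scalar factors are nonnegative and to pin down when each vanishes. Applying Lemma \ref{lemma:continuants-with-matrices} again, $A_{1,1} = K(l_1, \dotsc, l_{j-1})$ and $A_{1,2} = K(l_1, \dotsc, l_{j-2})$ (with the convention $K() = 1$), so
\[
\bigl(A \smallvec{u-1}{1}\bigr)_1 = (u-1) K(l_1, \dotsc, l_{j-1}) + K(l_1, \dotsc, l_{j-2}).
\]
If $j \ge 2$ both summands are nonnegative and the second is a positive integer, hence the factor is strictly positive; if $j = 1$, then $A = I$ and the factor reduces to $u - 1 \ge 0$, vanishing exactly when $u = 1$. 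An analogous analysis of the right-hand factor, using the symmetry (or Lemma \ref{prop:continuant-symmetry} applied to $B$), shows it is strictly positive when $j \le d-1$, and equals $v - 1$ when $j = d$, vanishing exactly when $v = 1$.

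A brief case split on $j$ now delivers the equality characterisation. For $2 \le j \le d-1$ both factors are strictly positive, giving the strict inequality; for $j = 1 < d$ equality holds iff $u = 1$; for $j = d > 1$ equality holds iff $v = 1$; and for $j = 1 = d$ the difference is $(u-1)(v-1)$, which vanishes iff $u = 1$ or $v = 1$, consistent with both clauses of the stated criterion being simultaneously available.

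I do not expect a significant obstacle; the argument is essentially algebraic, and the only subtlety is keeping track of the convention $K() = 1$ in the boundary cases $j \in \{1, 2, d-1, d\}$ so that the uniform formulas for $A_{1,2}$ and $B_{2,1}$ remain valid.
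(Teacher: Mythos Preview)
Your argument is correct and mirrors the paper's proof almost exactly: the same rank-one matrix identity, the same factorisation $\Delta = (A_{1,1}(u-1)+A_{1,2})(B_{1,1}(v-1)+B_{2,1})$, and the same case analysis for equality. The only cosmetic difference is that you read off the positivity of $A_{1,2}$ and $B_{2,1}$ from their interpretation as continuants of positive integers, whereas the paper argues $(A_{1,1},A_{1,2})\neq(0,0)$ via $\det A = \pm 1$ and then identifies $A_{1,2}=0 \iff j=1$; both routes are equally short.
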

\begin{proof}
Lemma \ref{lemma:continuants-with-matrices} implies that
\[
K(l_1, \dotsc, l_d) = (P \smallmat{u+v}{1}{1}{0} Q)_{1,1}
\]
and
\[
K(l_1, \dotsc, l_{j-1}, u, v, l_{j+1}, \dotsc, l_d) = (P \smallmat{u}{1}{1}{0} \smallmat{v}{1}{1}{0} Q)_{1,1},
\]
where $P = \smallmat{l_1}{1}{1}{0} \dotsm \smallmat{l_{j-1}}{1}{1}{0}$ and $Q = \smallmat{l_{j+1}}{1}{1}{0} \dotsm \smallmat{l_d}{1}{1}{0}$.
Using
\[
\smallmat{u}{1}{1}{0} \smallmat{v}{1}{1}{0} - \smallmat{u+v}{1}{1}{0} = \smallmat{(u-1)(v-1)}{u-1}{v-1}{1} = \smallvec{u-1}{1} (v-1,1)
\]
we obtain
\begin{align*}
\Delta &= K(l_1, \dotsc, l_{j-1}, u, v, l_{j+1}, \dotsc, l_d) - K(l_1, \dotsc, l_d) = (P \smallvec{u-1}{1} (v-1,1) Q)_{1,1} \\
&= (P_{1,1} (u-1) + P_{1,2})(Q_{1,1} (v-1) + Q_{2,1}).
\end{align*}
Here $P_{1,1}, P_{1,2}, Q_{1,1}, Q_{2,1}, u-1, v-1 \ge 0$, so $\Delta \ge 0$.
Note that $\det P, \det Q \in \{-1,1\}$, since $\det \smallmat{X}{1}{1}{0} = -1$.
So $(P_{1,1}, P_{1,2}) \neq (0,0)$ and $(Q_{1,1}, Q_{2,1}) \neq (0,0)$.
Hence $\Delta = 0$ if and only if $u = 1$ and $P_{1,2} = 0$, or $v = 1$ and $Q_{2,1} = 0$.
It is easy to see that $P_{1,2} = 0$ if and only if $j = 1$, and similarly, $Q_{2,1} = 0$ if and only if $j = d$.
\end{proof}

We introduce a few notations.
Let
\[
h(l_1, \dotsc, l_d) = \sum_{i=1}^d (l_i - 1) = l_1 + \dotsm + l_d - d.
\]
For $r \in \Z$ and $a \in \Z_{\ge 0}$ let
\[
E_{r, a} = \{(l_1, \dotsc, l_d) \in E_r ; \, h(l_1, \dotsc, l_d) = a\}
\]
and
\[
E'_{r, a} = \{(l_1, \dotsc, l_d) \in E'_r ; \, h(l_1, \dotsc, l_d) = a\}.
\]
Then $E'_r = \bigcup_{a \ge 0} E'_{r,a} = \bigcup_{t \le r} \bigcup_{a \ge 0} E_{t,a}$.

For $s, p_0, p_1, \dotsc, p_s \in \Z_{\ge 0}$ let
\[
w_{p_0, \dotsc, p_s}(X_1, \dotsc, X_s) = (\underbrace{1, \dotsc, 1}_{p_0}, X_1, \underbrace{1, \dotsc, 1}_{p_1}, X_2, \dotsc, \underbrace{1, \dotsc, 1}_{p_{s-1}}, X_s, \underbrace{1, \dotsc, 1}_{p_s})
\]
and
\[
\kappa_{p_0, \dotsc, p_s}(X_1, \dotsc, X_s) = K(w_{p_0, \dotsc, p_s}(X_1, \dotsc, X_s)).
\]
For $s = 0$ we simply write
\[
w_{p_0} = (\underbrace{1, \dotsc, 1}_{p_0}) \textrm{ and } \kappa_{p_0} = K(w_{p_0}).
\]

The idea is that starting from $(l_1, \dotsc, l_d) \in E'_r$, and using the operation of Proposition \ref{prop:continuant-operation} several times, and also possibly increasing elements or adding new elements to $(l_1, \dotsc, l_d)$, we can increase $K(l_1, \dotsc, l_n)$ to $K(w_{r+1})$.
If we stop a bit earlier, we get the largest continuants.
The precise statement is described in the following proposition.
\begin{proposition} \label{prop:continuants-reduction-step}
If $r \in \Z_{\ge 0}$ and $\underline{l} \in E'_r \setminus (E_{r,0} \cup E_{r,1})$, then there is an $\underline{m} \in E_{r-1,0} \cup E_{r,2}$ such that $K(\underline{l}) \le K(\underline{m})$.
\end{proposition}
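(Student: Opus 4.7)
The plan is to consider two cases according to whether $\underline{l}$ has maximal sum $r+1$ or strictly smaller, and in each case to apply the splitting operation of Proposition \ref{prop:continuant-operation} to interior entries repeatedly, driving $\underline{l}$ either down to $E_{r,2}$ or onto the all-ones tuple.

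First, suppose $\underline{l} \in E_r$. Since $\underline{l} \notin E_{r,0} \cup E_{r,1}$, we have $h(\underline{l}) \geq 2$. The constraint $l_1 = l_d = 1$ forces every entry $l_j \geq 2$ to sit at an interior position $j \in \{2, \dotsc, d-1\}$. As long as $h(\underline{l}) > 2$, I would pick such a $j$ and replace $l_j$ with the two-entry block $(1, l_j - 1)$. Proposition \ref{prop:continuant-operation} guarantees that $K$ does not decrease under this move (the equality case requires $j = 1$ or $j = d$ and is therefore excluded), while the sum is preserved, the boundary entries remain $1$, and $h$ drops by exactly one. Iterating $h(\underline{l}) - 2$ times lands us in $E_{r,2}$ and yields the desired $\underline{m}$.

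Next, suppose $\underline{l} \in E_t$ for some $t \leq r - 1$. The same interior-splitting procedure collapses $\underline{l}$ to the all-ones tuple $w_{t+1}$ of length $t+1$ without decreasing $K$. It then suffices to observe that $K(w_{t+1}) \leq K(w_r)$, which is immediate from the monotonicity of $K(w_n)$ in $n$ (either directly from the recursion $K(w_{n+1}) = K(w_n) + K(w_{n-1})$, or from the identity $K(w_n) = F_{n+1}$). We take $\underline{m} = w_r \in E_{r-1,0}$.

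The only point needing any care is verifying that an interior entry of size at least $2$ is always available when $h \geq 1$; this follows immediately from $l_1 = l_d = 1$, which concentrates the entire excess $h$ in the interior positions. I do not foresee any substantial obstacle: the proposition is essentially a bookkept, repeated application of the splitting inequality, together with the trivial fact that $K(w_n)$ grows with $n$.
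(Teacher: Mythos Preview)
Your proof is correct and rests on the same core tool (Proposition \ref{prop:continuant-operation}) as the paper, but the case organization differs. The paper runs a single induction on $h(\underline{l})$: for $h=0$ it pads with ones to reach $w_r\in E_{r-1,0}$; for $h=1$ (necessarily with sum $<r+1$, since $\underline l\notin E_{r,1}$) it raises the lone $2$ to a $3$ and appends ones to land in $E_{r,2}$; and for $h\ge 2$ it performs one interior split and invokes the induction hypothesis. You instead split on whether the total sum equals $r+1$ or is strictly smaller, and in each case iterate the interior split until hitting the target ($E_{r,2}$ in the first case, the all-ones tuple and then $w_r\in E_{r-1,0}$ in the second). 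Your route is marginally tidier in that it never needs the entrywise-monotonicity/append step the paper uses for its $h=1$ case, and it sends every sum-deficient tuple to $E_{r-1,0}$ rather than sometimes to $E_{r,2}$; the paper's version, on the other hand, is a single unified induction. The two arguments are essentially equivalent in length and difficulty.
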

\begin{proof}
We will use several times that if $u_1, \dotsc, u_d \in \Z_{\ge 1}$, then
\[
K(u_1, \dotsc, u_d) \le K(u_1, \dotsc, u_d, 1),
\]
and if $u'_1, \dotsc, u'_d \in \Z_{\ge 1}$ and $u_i \le u'_i$ for every $i$, then
\[
K(u_1, \dotsc, u_d) \le K(u'_1, \dotsc, u'_d).
\]
We prove by induction on $h(\underline{l})$.
If $h(\underline{l}) = 0$, then $\underline{l} = w_d$ for some $d \le r$, so we can take $\underline{m} = w_r \in E_{r-1,0}$.
If $h(\underline{l}) = 1$, then $\underline{l} = w_{p_0, p_1}(2)$ for some $p_0, p_1 \in \Z_{\ge 1}$ with $p_0+p_1 \le r-2$, so we can take $\underline{m} = w_{p_0,r-2-p_0}(3) \in E_{r,2}$.
Finally, let $h(\underline{l}) \ge 2$, and suppose that the statement is true for smaller values of $h$.
Then there is a $j \in \{2, \dotsc, d-1\}$ such that $l_j \ge 2$.
Let
\[
\underline{l'} = (l_1, \dotsc, l_{j-1}, l_j - 1, 1, l_{j+1}, \dotsc, l_d),
\]
then $\underline{l'} \in E'_r$, and $K(\underline{l}) \le K(\underline{l'})$ by Proposition \ref{prop:continuant-operation}.
Moreover $h(\underline{l'}) = h(\underline{l})-1 \ge 1$, so $\underline{l'} \notin E_{r,0}$.
If $\underline{l'} \notin E_{r,1}$, then by the induction hypothesis there is an $\underline{m} \in E_{r-1,0} \cup E_{r,2}$ such that $K(\underline{l}) \le K(\underline{l'}) \le K(\underline{m})$.
So suppose that $\underline{l'} \in E_{r,1}$.
Then $\underline{l} \in E_{r,2}$, so we can take $\underline{m} = \underline{l}$.
\end{proof}

\section{Fibonacci identities} \label{sec:Fibonacci}

Based on Proposition \ref{prop:continuants-reduction-step}, our next goal is to calculate $K(\underline{l})$ for $\underline{l} \in E_{r,0} \cup E_{r,1} \cup E_{r-1,0} \cup E_{r,2}$.
If $r \ge 1$, then $E_{r,0} = \{w_{r+1}\}$, $E_{r-1,0} = \{w_r\}$,
\[
E_{r,1} = \{w_{p_0,p_1}(2) ; \, p_0, p_1 \in \Z_{\ge 1}, \, p_0 + p_1 = r-1 \},
\]
and $E_{r,2} = U_r \cup V_r$, where
\[
U_r = \{w_{p_0,p_1}(3) ; \, p_0, p_1 \in \Z_{\ge 1}, \, p_0 + p_1 = r-2 \}
\]
and
\[
V_r = \{w_{p_0,p_1,p_2}(2) ; \, p_0, p_2 \in \Z_{\ge 1}, \, p_1 \in \Z_{\ge 0}, \, p_0 + p_1 + p_2 = r-3 \}.
\]
In general $\kappa_{p_0, \dotsc, p_s}(X_1, \dotsc, X_s) = M_{1,1}$, where
\[
M = \smallmat{1}{1}{1}{0}^{p_0} \smallmat{X_1}{1}{1}{0} \smallmat{1}{1}{1}{0}^{p_1} \smallmat{X_2}{1}{1}{0} \dotsm \smallmat{X_s}{1}{1}{0} \smallmat{1}{1}{1}{0}^{p_s}.
\]
Note that $\smallmat{1}{1}{1}{0}^p = \smallmat{F_{p+1}}{F_p}{F_p}{F_{p-1}}$ for every $p \in \Z$.
Calculating the matrix products, we get
\[
\kappa_{p_0} = F_{p_0+1},
\]
\[
\kappa_{p_0,p_1}(2) = 2 F_{p_0+1} F_{p_1+1} + F_{p_0+1} F_{p_1} + F_{p_0} F_{p_1+1},
\]
\[
\kappa_{p_0,p_1}(3) = 3 F_{p_0+1} F_{p_1+1} + F_{p_0+1} F_{p_1} + F_{p_0} F_{p_1+1},
\]
\begin{align*}
\kappa_{p_0,p_1,p_2}(2,2) &= F_{p_0+1} F_{p_1} F_{p_2} + F_{p_0} F_{p_1+1} F_{p_2} + 2 F_{p_0+1} F_{p_1+1} F_{p_2} \\
&\phantom{={}} + F_{p_0+1} F_{p_1-1} F_{p_2+1} + F_{p_0} F_{p_1} F_{p_2+1} + 4 F_{p_0+1} F_{p_1} F_{p_2+1} \\
&\phantom{={}} + 2 F_{p_0} F_{p_1+1} F_{p_2+1} + 4 F_{p_0+1} F_{p_1+1} F_{p_2+1}.
\end{align*}
In the following lemma we express these values in more useful forms.
\begin{lemma} \label{lemma:Fibonacci-identities}
If $p_0, p_1, p_2 \in \Z_{\ge 0}$, then
\[
\kappa_{p_0,p_1}(2) = F_{p_0+p_1+3} - F_{p_0} F_{p_1} = F_{p_0+p_1+2} + F_{p_0+p_1} + F_{p_0-1} F_{p_1-1},
\]
\[
\kappa_{p_0,p_1}(3) = F_{p_0+p_1+3} + F_{p_0+p_1+1} - 2 F_{p_0+p_1-2} - 2 F_{p_0-2} F_{p_1-2},
\]
\begin{align*}
\kappa_{p_0,p_1,p_2}(2,2) &= (F_{p_0+p_1+p_2+4} + F_{p_0+p_1+p_2+2} - F_{p_0+p_1+p_2-4}) - (F_{p_1}(F_{p_0-1} F_{p_2-1} \\
&\phantom{={}} + 3 F_{p_0-2} F_{p_2-1} + 3 F_{p_0-1} F_{p_2-2}) + 2 F_{p_0-2} F_{p_1+1} F_{p_2-2}).
\end{align*}
\end{lemma}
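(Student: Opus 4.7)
The strategy is to verify each of the three identities by direct algebraic manipulation of the matrix-product expansions already displayed just above the lemma, using only three standard tools: the Fibonacci recursion, the addition formula $F_{m+n+1} = F_{m+1} F_{n+1} + F_m F_n$ (together with its shift $F_{m+n-1} = F_m F_n + F_{m-1} F_{n-1}$), and Vajda's identity as recorded in Lemma \ref{lemma:FiFj-FkFl}.

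For the first identity, I would expand $F_{p_0+p_1+3} = F_{(p_0+1)+(p_1+1)+1} = F_{p_0+2} F_{p_1+2} + F_{p_0+1} F_{p_1+1}$ by the addition formula, then use $F_{p_0+2} = F_{p_0+1} + F_{p_0}$ and $F_{p_1+2} = F_{p_1+1} + F_{p_1}$ to write $F_{p_0+2} F_{p_1+2}$ as a sum of four products; the five resulting terms collect to $\kappa_{p_0,p_1}(2) + F_{p_0} F_{p_1}$, yielding the first form. For the second form I would apply $F_{p_0+p_1+3} = F_{p_0+p_1+2} + F_{p_0+p_1} + F_{p_0+p_1-1}$ and then use the shifted addition identity $F_{p_0+p_1-1} = F_{p_0} F_{p_1} + F_{p_0-1} F_{p_1-1}$ to absorb the $-F_{p_0} F_{p_1}$ term.

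For $\kappa_{p_0,p_1}(3)$, the given expansions show directly that $\kappa_{p_0,p_1}(3) - \kappa_{p_0,p_1}(2) = F_{p_0+1} F_{p_1+1}$. Substituting the first identity and replacing $F_{p_0+1} F_{p_1+1}$ by $F_{p_0+p_1+1} - F_{p_0} F_{p_1}$ via the addition formula, the target reduces to $F_{p_0} F_{p_1} - F_{p_0+p_1-2} = F_{p_0-2} F_{p_1-2}$, which is an immediate application of Vajda's identity.

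The main obstacle is the third identity, whose expansion already contains eight triple products. My plan is to group these by $F_{p_2}$ versus $F_{p_2+1}$: the $F_{p_2}$-bracket is exactly $\kappa_{p_0,p_1}(2)$, while the $F_{p_2+1}$-bracket reorganizes as $2\kappa_{p_0,p_1}(2) + F_{p_0+1} F_{p_1+1} + F_{p_0+2} F_{p_1}$ after using $F_{p_1-1} + F_{p_1} = F_{p_1+1}$ and $F_{p_0+1} + F_{p_0} = F_{p_0+2}$. After substituting the first identity into each $\kappa_{p_0,p_1}(2)$, I would iteratively apply the addition formula to collapse pairs such as $F_{p_0+1} F_{p_1+1}$ and $F_{p_0+2} F_{p_1}$ into single Fibonacci numbers of the form $F_{p_0+p_1+k}$, and then multiply through by $F_{p_2}$ and $F_{p_2+1}$ and apply the addition formula again in the $p_2$-variable. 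Vajda's identity accounts for the residual products of the shape $F_{p_0-1} F_{p_2-1}$, $F_{p_0-2} F_{p_2-1}$, $F_{p_0-1} F_{p_2-2}$, and $F_{p_0-2} F_{p_1+1} F_{p_2-2}$ that appear in the stated answer. The bookkeeping is substantial but every transformation uses only the tools of the previous two steps.
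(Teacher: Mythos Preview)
Your proposal is correct and follows essentially the same route as the paper: both arguments use the addition formula $F_{i+j}=F_{i+1}F_j+F_iF_{j-1}$ (derived in the paper from $\smallmat{1}{1}{1}{0}^{i+j}=\smallmat{1}{1}{1}{0}^{i}\smallmat{1}{1}{1}{0}^{j}$) to express each side as a polynomial in $F_{p_0},F_{p_0+1},F_{p_1},F_{p_1+1}$ (and $F_{p_2},F_{p_2+1}$ for the third identity) and then compare. The only difference is organizational: the paper simply states that this reduction works and delegates the bookkeeping to Mathematica, whereas you supply hand-friendly structure---observing $\kappa_{p_0,p_1}(3)-\kappa_{p_0,p_1}(2)=F_{p_0+1}F_{p_1+1}$, grouping the eight terms of $\kappa_{p_0,p_1,p_2}(2,2)$ by $F_{p_2}$ and $F_{p_2+1}$ and recognizing $\kappa_{p_0,p_1}(2)$ in each bracket, and invoking Vajda (Lemma~\ref{lemma:FiFj-FkFl} with $k=2$) for the residual step $F_{p_0}F_{p_1}-F_{p_0+p_1-2}=F_{p_0-2}F_{p_1-2}$---that makes the verification feasible without a computer.
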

\begin{proof}
If $i,j \in \Z$, then
\[
\smallmat{F_{i+j+1}}{F_{i+j}}{F_{i+j}}{F_{i+j-1}} = \smallmat{1}{1}{1}{0}^{i+j} = \smallmat{1}{1}{1}{0}^i \smallmat{1}{1}{1}{0}^j = \smallmat{F_{i+1}}{F_i}{F_i}{F_{i-1}} \smallmat{F_{j+1}}{F_j}{F_j}{F_{j-1}},
\]
hence $F_{i+j} = F_{i+1} F_j + F_i F_{j-1}$.
Applying this identity twice, we get that $F_{i+j+k} = F_{i+j+1} F_k + F_{i+j} F_{k-1} = (F_{i+1} F_{j+1} + F_i F_j) F_k + (F_{i+1} F_j + F_i F_{j-1}) F_{k-1}$ for $i,j,k \in \Z$.
Using these identities, one can express every term in the statement of the Lemma as a polynomial of $F_{p_0}, F_{p_0+1}, F_{p_1}, F_{p_1+1}, F_{p_2}, F_{p_2+1}$.
Comparing the obtained polynomials, one can check the stated identities.
The calculations could be done by hand, but they are tedious.
Instead we have used Mathematica \cite{Mathematica} to carry out these symbolic calculations, this is done in the attached file.
\end{proof}

\begin{corollary} \label{cor:max-min}
If $r \ge 6$, then
\begin{align*}
\max \{K(\underline{l}) ; \, \underline{l} \in E_{r,1} \} &< F_{r+2} = \kappa_{r+1}, \\
\min \{K(\underline{l}) ; \, \underline{l} \in E_{r,1} \} &= \kappa_{1,r-2}(2) = F_{r+1} + F_{r-1}, \\
\max \{K(\underline{l}) ; \, \underline{l} \in U_r \} &= \kappa_{2,r-4}(3) = F_{r+1} + F_{r-1} - 2F_{r-4}, \\
\max \{K(\underline{l}) ; \, \underline{l} \in V_r \} &= \kappa_{2,0,r-5}(2,2) = F_{r+1} + F_{r-1} - F_{r-7}.
\end{align*}
So if $r \ge 6$, then
\[
\max \{K(\underline{l}) ; \, \underline{l} \in E_{r-1,0} \cup E_{r,2} \} \le \min \{K(\underline{l}) ; \, \underline{l} \in E_{r,1} \},
\]
with strict inequality for $r \neq 7$.
\end{corollary}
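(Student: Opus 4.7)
The plan is to bound or compute each of the four max/min quantities separately via the identities in Lemma~\ref{lemma:Fibonacci-identities}, and then assemble the combined inequality.

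Handling $E_{r,1}$ is quick. By Lemma~\ref{lemma:Fibonacci-identities}, every element has the form $\kappa_{p_0,p_1}(2) = F_{r+2} - F_{p_0}F_{p_1}$ with $p_0 + p_1 = r-1$ and $p_0, p_1 \ge 1$. Since $F_{p_0}F_{p_1} \ge 1$, the bound $\max < F_{r+2}$ is immediate. For the minimum I must maximize $F_{p_0}F_{p_1}$; Lemma~\ref{lemma:FiFj-FkFl} combined with Lemma~\ref{lemma:Fibonacci-sign} rewrites $F_1 F_{r-2} - F_i F_{r-1-i}$ as $F_{i-1}F_{r-2-i} \ge 0$ for $i \in \{1, \dots, r-2\}$, so the maximum of $F_{p_0}F_{p_1}$ is $F_{r-2}$, giving $\min \kappa = F_{r+2} - F_{r-2} = F_{r+1}+F_{r-1}$.

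For $U_r$ and $V_r$ the strategy is to read Lemma~\ref{lemma:Fibonacci-identities} as writing $\kappa$ as the claimed maximum minus a non-negative correction, and to locate where that correction vanishes. For $U_r$ the correction $2F_{p_0-2}F_{p_1-2}$ involves Fibonacci numbers at indices $\ge -1$, hence is non-negative, vanishing precisely when $p_0 = 2$ or $p_1 = 2$. This gives $\max U_r = F_{r+1}+F_{r-1}-2F_{r-4}$.

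The main obstacle is $V_r$, whose correction splits as $F_{p_1} \cdot S(p_0, p_2) + 2F_{p_0-2}F_{p_1+1}F_{p_2-2}$, where $S$ denotes the three-term sum appearing in Lemma~\ref{lemma:Fibonacci-identities}; every factor is a Fibonacci number at an index $\ge -1$, so the whole expression is non-negative. A case split on $p_1$ is needed. If $p_1 \ge 1$, then forcing $S(p_0, p_2) = 0$ and inspecting the terms $F_{p_0-2}F_{p_2-1}$ and $F_{p_0-1}F_{p_2-2}$ (using $F_{-1} = F_1 = 1$ and $F_0 = 0$) leaves only $(p_0, p_2) = (1,1)$, in which case the second summand becomes $2F_{r-4} > 0$ for $r \ge 6$. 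If $p_1 = 0$, the first summand vanishes and the second reduces to $2F_{p_0-2}F_{p_2-2}$, which is zero iff $p_0 = 2$ or $p_2 = 2$. By continuant symmetry the maximum is attained at $(2, 0, r-5)$, yielding $\max V_r = F_{r+1}+F_{r-1}-F_{r-7}$. Then the combined inequality follows by comparing $E_{r-1,0} = \{w_r\}$ (contributing $\kappa_r = F_{r+1}$), $\max U_r$, and $\max V_r$ against $F_{r+1}+F_{r-1}$: the first is strictly smaller since $F_{r-1} > 0$, the second since $F_{r-4} > 0$ for $r \ge 6$, and the third is at most $F_{r+1}+F_{r-1}$ with equality iff $F_{r-7} = 0$, i.e.\ iff $r = 7$.
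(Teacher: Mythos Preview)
Your proof is correct and follows essentially the same approach as the paper: use the identities of Lemma~\ref{lemma:Fibonacci-identities} to write each $\kappa$ as the claimed extremal value plus or minus a correction, and then verify non-negativity of the correction via $F_n \ge 0$ for $n \ge -1$. The only notable difference is your treatment of $\min E_{r,1}$: the paper uses the second form $\kappa_{p_0,p_1}(2) = F_{r+1} + F_{r-1} + F_{p_0-1}F_{p_1-1}$ from Lemma~\ref{lemma:Fibonacci-identities}, which makes the minimum immediate, whereas you maximize $F_{p_0}F_{p_1}$ via Lemma~\ref{lemma:FiFj-FkFl}; both are fine. Your analysis of where the $V_r$ correction vanishes is more detailed than the paper's (which leaves attainment implicit), and is carried out correctly.
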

\begin{proof}
Let $r \ge 6$.
Recalling the description of $E_{r,1}$, $U_r$ and $V_r$, we see that
\begin{align*}
\{K(\underline{l}) ; \, \underline{l} \in E_{r,1} \} &= \{\kappa_{p_0,p_1}(2) ; \, p_0, p_1 \in \Z_{\ge 1}, \, p_0 + p_1 = r-1\}, \\
\{K(\underline{l}) ; \, \underline{l} \in U_r \} &= \{\kappa_{p_0,p_1}(3) ; \, p_0, p_1 \in \Z_{\ge 1}, \, p_0 + p_1 = r-2 \}, \\
\{K(\underline{l}) ; \, \underline{l} \in V_r \} &= \{\kappa_{p_0,p_1,p_2}(2) ; \, p_0, p_2 \in \Z_{\ge 1}, \, p_1 \in \Z_{\ge 0}, \, p_0 + p_1 + p_2 = r-3 \}.
\end{align*}
Using Lemma \ref{lemma:Fibonacci-identities} one can easily reduce the first part of the proposition to the following statements.
If $p_0, p_1 \in \Z_{\ge 1}$ and $p_0 + p_1 = r-1$, then $F_{p_0} F_{p_1} > 0$ and $F_{p_0-1} F_{p_1-1} \ge 0$.
If $p_0, p_1 \in \Z_{\ge 1}$ and $p_0 + p_1 = r-2$, then $2 F_{p_0-2} F_{p_1-2} \ge 0$.
If $p_0, p_2 \in \Z_{\ge 1}$, $p_1 \in \Z_{\ge 0}$ and $p_0 + p_1 + p_2 = r-3$, then $
F_{p_1}(F_{p_0-1} F_{p_2-1} + 3 F_{p_0-2} F_{p_2-1} + 3 F_{p_0-1} F_{p_2-2}) + 2 F_{p_0-2} F_{p_1+1} F_{p_2-2} \ge 0$.
These statements follow from the facts that $F_n > 0$ for $n \ge 1$, and $F_n \ge 0$ for $n \ge -1$.

Now we prove the last part.
If $\underline{l} \in E_{r-1,0}$, then $K(\underline{l}) = \kappa_r = F_{r+1} < F_{r+1} + F_{r-1}$.
Since $E_{r,2} = U_r \cup V_r$, the statement follows from $F_{r+1} + F_{r-1} - 2F_{r-4} < F_{r+1} + F_{r-1}$ and $F_{r+1} + F_{r-1} - F_{r-7} \le F_{r+1} + F_{r-1}$.
Note that here $F_{r-7} > 0$ if $r \ge 6$ and $r \neq 7$.
\end{proof}

Now we are ready to prove our main result.
\begin{proof}[Proof of Theorem \ref{thm:main}]
The second part of the theorem follows from the first part by Lemma \ref{lemma:FiFj-ordered}.
We prove now the first part.
For $r \in \{0,1,2,3,4,5\}$ one could check the statement by hand.
The attached Mathematica file contains a program that does this.

Suppose that $r \ge 6$, and let $m \ge 1$.
By Corollary \ref{cor:diatomic-row=continuants}, $L_m(r)$ is the $m$th largest distinct value in $\{K(\underline{l}) ; \underline{l} \in E'_r\}$.
If $\underline{l} \in E'_r \setminus (E_{r,0} \cup E_{r,1})$, then
\[
K(\underline{l}) \le \min(K(\underline{l'}) ; \, \underline{l'} \in E_{r,1})
\]
by Proposition \ref{prop:continuants-reduction-step} and Corollary \ref{cor:max-min}.
Moreover
\[
\{K(\underline{l}); \, \underline{l} \in E_{r,0}\} = \{F_{r+2}\}
\]
and
\[
\{K(\underline{l}) ; \, \underline{l} \in E_{r,1}\} = \{F_{r+2} - F_i F_j ; \, i,j \in \Z_{\ge 1}, \, i+j = r-1\}
\]
by Lemma \ref{lemma:Fibonacci-identities}.
So $\{L_1(r), \dotsc, L_{|H|}(r)\} = H$, where
\[
H = \{F_{r+2} - F_i F_j ; \, i,j \in \Z_{\ge 0}, \, i+j = r-1\}.
\]
Here $|H| = \lceil \frac{r}{2} \rceil$ by Lemma \ref{lemma:FiFj-ordered}.
\end{proof}

\section{Further research} \label{sec:further-research}

Using our results, it would not be hard to describe the exact positions where the first $\lceil \frac{r}{2} \rceil$ largest values in the $r$th row of Stern's diatomic sequence appear.

To determine $L_m(r)$ for $1 \le m \le \lceil \frac{r}{2} \rceil$, we needed to calculate and (sometimes) compare the values of $K(\underline{l})$ for $l \in E_{r,0} \cup E_{r,1} \cup E_{r,2}$.
To go a step further, i.e., to determine $L_m(r)$ for some $m > \lceil \frac{r}{2} \rceil$, we would probably need to calculate and compare the values of $K(\underline{l})$ for $l \in \bigcup_{i=0}^3 E_{r,i}$.
For example, we have ordered $\{K(\underline{l}); \, \underline{l} \in E_{r,0} \cup E_{r,1} \}$, but we have not yet ordered $\{K(\underline{l}); \, \underline{l} \in E_{r,2} \}$.

Instead of studying Stern's diatomic array, which starts with the $0$th row $1,1$, we could study the following generalization.
Start with the $0$th row $a,b$, where $a,b \in \R$, and in each step construct a new row by copying the last row, and writing between each two consecutive elements their sum.
One could try to understand the largest values in the $r$th row of this array.


\begin{thebibliography}{9}

\bibitem{Lan14}
J. Lansing, Largest values for the Stern sequence, \textit{J. Integer Seq.}, \textbf{17} (2014), \href{https://cs.uwaterloo.ca/journals/JIS/VOL17/Lansing/lansing2.pdf}{Article 14.7.5}.

\bibitem{Lehmer} D. H. Lehmer, On {S}tern's {D}iatomic {S}eries, \textit{Amer. Math. Monthly}, \textbf{36} (1929), 59--67.

\bibitem{Lucas} E. Lucas, Sur les suites de {F}arey, \textit{Bull. Soc. Math. France}, \textbf{6} (1878), 118--119.

\bibitem{Stern} M. Stern, \"Uber eine zahlentheoretische {F}unktion, \textit{J. Reine Angew. Math.}, \textbf{55} (1858), 193--220.

\bibitem{wikiVajda} \url{https://proofwiki.org/wiki/Vajda's_Identity}.

\bibitem{Mathematica} Wolfram Research, Inc., Mathematica, Version 10.0, Champaign, IL (2014).

\end{thebibliography}
\end{document}